\newcounter{spec}
{\end{list}}
\renewcommand{\P}{{\mathbf P}}
\newcommand{\Z}{{\mathbb Z}}
\newcommand{\Q}{{\mathbb Q}}
\newcommand{\C}{{\mathbb C}}
\newcommand{\oi}{\hskip1mm {\buildrel \simeq \over \rightarrow} \hskip1mm}
\renewcommand{\lim}{\varprojlim}
\def\lra{\longrightarrow}
\def\X{{{\overline X}}}
\def\FF {{\overline{\mathbb F}}}
\def\P{{\bf P}}
\numberwithin{equation}{section}
\newfont{\gothic}{eufb10}
\newtheorem{theo}{Th\'{e}or\`{e}me}[section]
\newtheorem{prop}[theo]{Proposition}
\newtheorem{lem}[theo]{Lemme}
\newtheorem{cor}[theo]{Corollaire}
\theoremstyle{definition}
\newtheorem{defi}[theo]{D\'efinition}
\theoremstyle{remark}
\newtheorem{rema}[theo]{Remarque}
\newcommand{\bthe}{\begin{theo}}
\newcommand{\ble}{\begin{lem}}
\newcommand{\bpr}{\begin{prop}}
\newcommand{\bco}{\begin{cor}}
\newcommand{\bde}{\begin{defi}}
\newcommand{\ethe}{\end{theo}}
\newcommand{\ele}{\end{lem}}
\newcommand{\epr}{\end{prop}}
\newcommand{\eco}{\end{cor}}
\newcommand{\ede}{\end{defi}}
\newcommand{\et}{{\operatorname{\acute{e}t}}}
\newcommand{\Pic}{\operatorname{Pic}}
\newcommand{\F}{{\mathbb F}}
\newcommand{\Y}{{\overline Y}}
\def\k{{\overline k}}
\DeclareFontFamily{U}{wncy}{}
\DeclareFontShape{U}{wncy}{m}{n}{%
<5>wncyr5%
<6>wncyr6%
<7>wncyr7%
<8>wncyr8%
<9>wncyr9%
<10>wncyr10%
<11>wncyr10%
<12>wncyr6%
<14>wncyr7%
<17>wncyr8%
<20>wncyr10%
<25>wncyr10}{}
\DeclareMathAlphabet{\cyr}{U}{wncy}{m}{n}
\begin{document}

  \title[Cohomologie non ramifi\'ee des hypersurfaces de Fano]
  {Troisi\`eme groupe de cohomologie non ramifi\'ee des hypersurfaces de Fano}

\author{J.-L. Colliot-Th\'el\`ene}
\address{CNRS,  Universit\'e Paris Sud et Paris-Saclay \\Math\'ematiques, B\^atiment 425\\91405 Orsay C\'edex\\France}
\email{jlct@math.u-psud.fr}

\date{submitted  August 3rd, 2017;  revised  October 15th,  2017}
\maketitle

 \begin{abstract}
 Sur un corps alg\'ebriquement clos  
 et sur un corps fini,
on \'etablit de nouveaux  r\'esultats d'annulation pour la cohomologie non ramifi\'ee de degr\'e~3
 des hypersurfaces de Fano.  \end{abstract}

\begin{altabstract}
We establish the vanishing  of degree three unramified cohomology for
several  new types of Fano hypersurfaces when the ground field is either finite or
algebraically closed of arbitrary characteristic.
\end{altabstract}

Soit $X$ une vari\'et\'e 
projective, lisse, g\'eom\'etriquement connexe
sur un  corps $k$
et $\ell \neq {\rm car}(k)$
un nombre premier.
Pour tout couple d'entiers $i \geq 0$ et $j \in \Z$,   le groupe  de cohomologie non ramifi\'ee
$H^i_{nr}(X,\Q_{\ell}/\Z_{\ell}(j))$ est par d\'efinition le  
 groupe des sections globales    du
faisceau  pour la topologie de Zariski sur $X$ associ\'e au pr\'efaisceau 
qui \`a  un ouvert $U \subset X$ associe le groupe de cohomologie \'etale  $H^i_{\et}(U,\Q_{\ell}/\Z_{\ell}(j))$
de $U$ \`a valeurs dans le groupe
des racines $\ell$-primaires de l'unit\'e tordues $j$  fois. Les propri\'et\'es g\'en\'erales
de ces groupes sont d\'ecrites dans le rapport \cite{CTBarbara}.
Le groupe $H^2_{\et}(U,\Q_{\ell}/\Z_{\ell}(1))$ est la composante $\ell$-primaire du groupe de Brauer de $X$.
Les groupes $H^i_{nr}(X,\Q_{\ell}/\Z_{\ell}(j))$ sont des invariants $k$-birationnels
des vari\'et\'es projectives et lisses. On a une  application naturelle  
 du groupe de cohomologie galoisienne $H^{i}(k,\Q_{\ell}/\Z_{\ell}(j)) =   H^{i}_{\et}(k, \Q_{\ell}/\Z_{\ell}(j))$
dans le groupe $H^i_{nr}(X,\Q_{\ell}/\Z_{\ell}(j))$, application
 qui est un isomorphisme si $X$ est $k$-birationnelle
\`a un espace projectif ${\P}^m_{k}$. 

On s'int\'eresse ici au  groupe $H^3_{nr}(X,\Q_{\ell}/\Z_{\ell}(2))$.
Ce groupe joue un r\^ole important dans l'\'etude \cite{CTV, K, CTK,  CT}
de l'application ``cycle'' sur le groupe de Chow des cycles de codimension 2
$$ {\rm cyc}_{X} : CH^2(X) \otimes \Z_{\ell} \to H^4_{\et}(X,\Z_{\ell}(2)).$$

Pour une hypersurface cubique lisse $X \subset \P^n_{\C}$ sur le corps des complexes,
$n=4$ et $n=5$,
on sait que l'on a  $H^3_{nr}(X,\Q_{\ell}/\Z_{\ell}(2))=0$ pour tout $\ell$.
C'est une cons\'equence \cite[Thm. 1.1]{CTV}  de la conjecture de Hodge enti\`ere
pour les cycles de codimension 2 sur ces hypersurfaces cubiques. Pour $n=4$, 
cette conjecture est facile \`a \'etablir (voir  le th\'eor\`eme \ref{cycleappalgclos} ci-dessous). C'est aussi un cas tr\`es particulier d'un th\'eor\`eme  g\'en\'eral de C. Voisin sur les solides unir\'egl\'es.
Pour $n=5$, cette conjecture  fut d\'emontr\'ee par C.~Voisin \cite[Thm. 18]{V}.

Dans  \cite[\S 5.3]{CT}, j'ai  discut\'e des extensions de ce r\'esultat 
aux hypersurfaces lisses de degr\'e $d \leq n$ dans un espace projectif $\P^n_{\C}$ 
avec $n$ quelconque. Par la formule d'adjonction, ce sont exactement les
hypersurfaces lisses de Fano, c'est-\`a-dire \`a fibr\'e anticanonique ample.

Dans cet article, on consid\`ere la situation sur un
corps  alg\'ebriquement clos de caract\'eristique quelconque,
et sur un corps fini.

Plus pr\'ecis\'ement, pour $X \subset \P^n_{k}$ une hypersurface lisse de degr\'e $d\leq n$
sur un corps $k$ de caract\'eristique  diff\'erente de $\ell$, 
on \'etablit    $$H^3_{nr}(X,\Q_{\ell}/\Z_{\ell}(2))=0$$
dans chacun des cas suivants :

(i) $k$ alg\'ebriquement clos et $n \neq 5$ (Th\'eor\`eme \ref{cycleappalgclos})

 (ii) $k=\F$ fini et $n \neq 4, 5$ (Th\'eor\`eme \ref{cubiquefini});

(iii) $k$ alg\'ebriquement clos (de caract\'eristique diff\'erente de 2 et 3),     
$d=3$ et $n=5$ (Th\'eor\`eme \ref{charlespirutka});

(iv) $k=\F$ fini, $d=3$  et $n=4$ (Th\'eor\`eme \ref{parisu}).

Le cas des hypersurfaces cubiques  lisses dans $\P^5_{\F}$ reste ouvert.

La d\'emonstration du cas (iii) repose sur un  th\'eor\`eme de Charles et Pirutka \cite{CP}. 
Dans le cas   (iv), on offre deux d\'emonstrations, utilisant toutes deux la th\'eorie
du corps de classes sup\'erieur de K. Kato et S. Saito. L'une de ces d\'emonstrations
passe par un  th\'eor\`eme de Parimala et Suresh \cite{PS}.

Pour $X$ une vari\'et\'e sur un corps $k$ et $\k$ une cl\^oture s\'eparable de $k$,
on note $\X=X \times_{k} \k$.

%Les notations sont celles de \cite{CTK} et \cite{CT}.
\section{Quelques rappels}

\begin{lem}\label{Fano}
Soit $\F$ un corps fini. 
Soit $X \subset \P^n_{\F}$, $n \geq 4$, une hypersurface cubique lisse.
Le pgcd des degr\'es des  extensions finies $L$ de $\F$ sur lesquelles $X_{L}$
poss\`ede une $L$-droite est \'egal \`a 1.
\end{lem}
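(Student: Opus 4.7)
L'id\'ee est de consid\'erer le sch\'ema de Fano $F = F(X)$ param\'etrant les droites projectives contenues dans $X$, qui est un sous-sch\'ema ferm\'e de la grassmannienne $G(2,n+1)_{\F}$. Pour toute extension finie $L/\F$, les $L$-points de $F$ correspondent exactement aux $L$-droites trac\'ees sur $X_L$, si bien que le pgcd \`a calculer n'est autre que le pgcd des degr\'es des points ferm\'es de $F$.

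J'utiliserai alors le fait classique suivant : pour une hypersurface cubique lisse dans $\P^n$ avec $n \geq 4$, le sch\'ema $F$ est g\'eom\'etriquement int\`egre, de dimension $2n-6 \geq 2$ (cf.\ Altman--Kleiman ou Debarre). La dimension se calcule par l'argument standard sur la vari\'et\'e d'incidence ``couple (droite, hypersurface cubique)'' au-dessus de $G(2,n+1) \times \P H^0(\P^n,\sO(3))$ ; l'irr\'eductibilit\'e g\'eom\'etrique se v\'erifie habituellement par d\'eg\'en\'erescence \`a une cubique mod\`ele.

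La conclusion suivra alors des estim\'ees de Lang--Weil : en posant $d = 2n-6$ et $q = |\F|$, il existe une constante $C$ telle que
$$\bigl|\, \#F(\F_{q^m}) - q^{md}\,\bigr| \leq C\, q^{m(d - 1/2)}$$
pour tout $m \geq 1$. En particulier $F(\F_{q^m}) \neq \emptyset$ pour tout $m$ assez grand. Il suffira alors de choisir un tel $m$ pour lequel $F(\F_{q^m})$ et $F(\F_{q^{m+1}})$ sont simultan\'ement non vides, ce qui fournira une $\F_{q^m}$-droite et une $\F_{q^{m+1}}$-droite sur $X$ ; comme $\gcd(m, m+1) = 1$, le pgcd cherch\'e divisera $1$ et vaudra donc $1$.

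Le point le plus d\'elicat de cette approche est l'irr\'eductibilit\'e g\'eom\'etrique de $F(X)$ pour toute cubique lisse (et non seulement pour une cubique g\'en\'erale) ; ce r\'esultat classique admis, le reste de la preuve se r\'eduit \`a un usage routinier des bornes de Weil.
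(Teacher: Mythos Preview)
Your proposal is correct and follows essentially the same route as the paper: both arguments invoke Altman--Kleiman to ensure that the Fano scheme $F(X)$ of lines is nonempty, projective, smooth and geometrically connected for $n\geq 4$, and then apply the Lang--Weil estimates to produce $\F_{q^m}$-points for all large $m$. The paper leaves the final ``gcd'' step implicit, whereas you spell it out via two consecutive values of $m$; this is the only (cosmetic) difference.
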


\begin{proof} D'apr\`es Fano, Altman et Kleiman \cite{AK}, sur tout corps $k$,
la  vari\'et\'e de Fano $F=F(X)$ des droites de $X \subset \P^n_{k}$,  est non vide, projective et lisse \cite[Cor.  1.12]{AK}
pour $n \geq 3$ et g\'eom\'etriquement connexe pour $n \geq 4$ \cite[Thm 1.16 (i)]{AK}. 
Sur un corps fini $\F$, les estimations de Lang-Weil donnent le r\'esultat. \end{proof}
 
 \begin{rema}
 Des r\'esultats  pr\'ecis sur l'existence de droites sur le corps fini $\F$ lui-m\^eme sont obtenus dans \cite{DLR}.
 \end{rema}
 
 \begin{prop}\label{surfaces}
 Soit $X$ une surface projective,  lisse,
  g\'eom\'etriquement connexe 
  sur un corps $k$.
 Soit $\ell$ un nombre premier diff\'erent de la caract\'eristique de $k$.
 Si $k$ est alg\'ebriquement clos, ou si $k$ est fini,  $H^3_{nr}(X,\Q_{\ell}/\Z_{\ell}(2))=0$.
  \end{prop}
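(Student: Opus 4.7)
Mon plan est de distinguer les deux cas $k$ alg\'ebriquement clos et $k=\F$ fini, en exploitant dans chacun l'injection naturelle
$$H^3_{nr}(X,\Q_{\ell}/\Z_{\ell}(2)) \hookrightarrow H^3(k(X),\Q_{\ell}/\Z_{\ell}(2))$$
qui r\'esulte de la description \`a la Bloch-Ogus de la cohomologie non ramifi\'ee comme intersection des noyaux des r\'esidus aux points de codimension un.

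Dans le cas $k$ alg\'ebriquement clos, l'argument se r\'eduit \`a un calcul de dimension cohomologique. Puisque $\cd_{\ell}(k)=0$ et que $k(X)$ est de degr\'e de transcendance~$2$ sur $k$, on obtient $\cd_{\ell}(k(X)) \leq 2$, donc $H^3(k(X),\Q_{\ell}/\Z_{\ell}(2))=0$, ce qui fournit l'annulation cherch\'ee.

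Pour le cas $k=\F$ fini, la borne $\cd_{\ell}(\F(X)) \leq 3$ ne suffit plus. Je proc\'ederais via la suite spectrale de Hochschild-Serre pour l'extension $\bar\F(X)/\F(X)$~: combin\'ee au cas alg\'ebriquement clos d\'ej\`a trait\'e (appliqu\'e \`a $\bar X$, qui donne $H^3(\bar\F(X),\Q_{\ell}/\Z_{\ell}(2))=0$) et \`a $\cd(\F)=1$, elle fournit un isomorphisme
$$H^3(\F(X),\Q_{\ell}/\Z_{\ell}(2)) \simeq H^1\bigl(\F, H^2(\bar\F(X),\Q_{\ell}/\Z_{\ell}(2))\bigr).$$
Il reste alors \`a v\'erifier que le sous-groupe des classes non ramifi\'ees sur $X$ s'envoie sur z\'ero dans ce groupe. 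C'est le c\oe ur technique de la proposition, et l'outil adapt\'e est le th\'eor\`eme de Kato sur l'exactitude du complexe de Kato des surfaces lisses projectives sur un corps fini, cons\'equence de la th\'eorie du corps de classes sup\'erieur de Kato et Saito, explicitement discut\'e dans le rapport \cite{CTBarbara}.

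L'obstacle principal est donc ce dernier point, non \'el\'ementaire, qui d\'epend d'un th\'eor\`eme non trivial de Kato pour les surfaces sur corps fini; le cas alg\'ebriquement clos est en revanche essentiellement formel.
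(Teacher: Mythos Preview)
Your proposal is correct and follows essentially the same route as the paper: the algebraically closed case is handled identically via the cohomological dimension bound $\cd_{\ell}(k(X))\leq 2$, and the finite field case ultimately rests on Kato's theorem for surfaces over finite fields (the paper cites \cite[Thm.~0.7 and Corollary]{Kato} together with \cite[Rem.~2, p.~790]{CTSS}). Your Hochschild--Serre computation of $H^3(\F(X),\Q_{\ell}/\Z_{\ell}(2))$ is superfluous, since Kato's result already gives the vanishing of $H^3_{nr}$ directly, but it does no harm.
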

  \begin{proof}
  On a  $H^3_{nr}(X,\Q_{\ell}/\Z_{\ell}(2)) \subset H^3(k(X),\Q_{\ell}/\Z_{\ell}(2))$. Ce dernier groupe est
  nul si   $k$ est alg\'ebriquement clos, car la $\ell$-dimension cohomologique du corps des fonctions $k(X)$ est 2.

  Pour toute surface $X$ projective, lisse, 
  g\'eom\'etriquement
   connexe sur un corps fini
  et  $\ell$ premier diff\'erent de la caract\'eristique de $k$, on a $H^3_{nr}(X,\Q_{\ell}/\Z_{\ell}(2))=0$
(Sansuc, Soul\'e et l'auteur \cite[Rem. 2 p. 790]{CTSS}; K.~Kato   \cite[Thm. 0.7 and Corollary]{Kato}).

 \end{proof}

\begin{prop}\label{exptors}
Soit $n\geq 3$ un entier et soit $X \subset \P^n_{k}$ une hypersurface cubique lisse sur un corps $k$.
Soit $\ell$ un nombre premier diff\'erent de la caract\'eristique de $k$.

(i) Si $X$ poss\`ede un z\'ero-cycle de degr\'e 1,
le quotient du groupe $H^3_{nr}(X,\Q_{\ell}/\Z_{\ell}(2))$ par l'image de
$H^3(k,\Q_{\ell}/\Z_{\ell}(2))$ est annul\'e par 6. 

(ii) Si $X$ contient une droite $k$-rationnelle, le quotient du groupe
$H^3_{nr}(X,\Q_{\ell}/\Z_{\ell}(2))$ par l'image de
$H^3(k,\Q_{\ell}/\Z_{\ell}(2))$ est annul\'e par~2.

(iii) Si $k$ est alg\'ebriquement clos, $H^3_{nr}(X,\Q_{\ell}/\Z_{\ell}(2))$ est annul\'e par~2.

(iv) Si $k$ est fini, $H^3_{nr}(X,\Q_{\ell}/\Z_{\ell}(2))$ est annul\'e par~2.
\end{prop}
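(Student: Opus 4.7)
The plan is to establish (ii) first by exhibiting a conic-bundle structure on $X$, then to deduce (iii) and (iv) from (ii) by standard transfer arguments, and finally to obtain (i) by a related but more delicate reduction through the variety of lines through a closed point.

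For (ii), projection of $X$ from the $k$-rational line $L$ realises $X$ as birational to a conic bundle over $\P^{n-2}_k$: each plane $\Pi$ containing $L$ cuts $X$ along $L$ together with a residual conic, and sending $x \in X \setminus L$ to the unique plane through $L$ and $x$ gives the rational map $X \dashrightarrow \P^{n-2}_k$. Write $K := k(\P^{n-2})$, let $C/K$ be the generic fibre, a smooth conic, and let $K'/K$ be its (at most quadratic) splitting field. Since $C_{K'} \simeq \P^1_{K'}$, any $\alpha \in H^3_{nr}(X,\Q_{\ell}/\Z_{\ell}(2))$, viewed in $H^3(K(C))$, base-changes to a class in $H^3_{nr}(\P^1_{K'}) = H^3(K')$. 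Taking corestriction from $K'$ to $K$ then exhibits $2\alpha$ as the pullback of a class $\gamma \in H^3(K)$; a residue analysis on $\P^{n-2}_k$, using that the kernel of $H^3(K) \to H^3(K(C))$ is controlled by the Brauer class of $C$, lets one adjust $\gamma$ so as to land in $H^3_{nr}(\P^{n-2}_k) = H^3(k)$, giving (ii).

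For (iii), over an algebraically closed field $X$ contains a $k$-line by Altman--Kleiman (\cite{AK}, cf.\ the proof of Lemma~\ref{Fano}), so (ii) applies; the vanishing $H^3(k,\Q_{\ell}/\Z_{\ell}(2))=0$ coming from $\cd(k)=0$ then gives the conclusion. For (iv), Lemma~\ref{Fano} produces finite extensions $L_1,\ldots,L_r$ of $\F$, with $\gcd_i [L_i:\F]=1$, over which each $X_{L_i}$ contains an $L_i$-rational line; applying (ii) to each $X_{L_i}$ and using $H^3(L_i,\Q_{\ell}/\Z_{\ell}(2))=0$ (finite fields having cohomological dimension~$1$) yields $2\alpha|_{X_{L_i}}=0$ for every $\alpha \in H^3_{nr}(X,\Q_{\ell}/\Z_{\ell}(2))$. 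Corestriction back to $\F$ then gives $2[L_i:\F]\cdot\alpha = 0$, and the B\'ezout identity in the $[L_i:\F]$ produces $2\alpha=0$.

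The genuinely delicate point is (i). For each closed point $P_i$ in the support of the degree-$1$ zero-cycle, the variety of lines through $P_i$ on $X_{k(P_i)}$ is the zero locus in the $\P^{n-1}_{k(P_i)}$ of directions at $P_i$ of the three equations obtained from the vanishing of the linear, quadratic and cubic parts of the defining cubic expanded around $P_i$, hence a complete intersection of multi-degrees $(1,2,3)$ of total degree~$6$. Intersecting with a generic linear subspace yields a zero-cycle of degree~$6$ on this Fano variety, hence a line on $X$ defined over an extension of $k(P_i)$ of degree dividing~$6$. Applying (ii) over these extensions, corestricting back to $k$, and invoking the B\'ezout identity $\sum n_i [k(P_i):k]=1$ coming from the degree-$1$ zero-cycle hypothesis bounds the cokernel of $H^3(k,\Q_{\ell}/\Z_{\ell}(2)) \to H^3_{nr}(X,\Q_{\ell}/\Z_{\ell}(2))$. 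The main obstacle will be to track the multiplicative constants sharply enough to recover~$6$ rather than a larger multiple such as~$12$; this likely requires using the degree-$3$ zero-cycle coming from a transverse intersection of a $k$-line of $\P^n_k$ with $X$ (available on every cubic) to absorb one factor of~$2$ otherwise introduced by the conic splitting in (ii).
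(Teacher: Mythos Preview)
Your treatment of (ii), (iii) and (iv) is correct and matches the paper's. The paper simply cites \cite[Prop.~2.1]{ACTP} for (ii), and your conic-bundle argument is essentially what lies behind that reference; the derivations of (iii) from (ii) via Altman--Kleiman, and of (iv) from (ii) via Lemma~\ref{Fano} plus restriction--corestriction, are exactly what the paper does (the paper also notes that $n=3$ in (iv) is already covered by Proposition~\ref{surfaces}).

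The gap is in (i). Your approach through lines passing through a rational point runs into two obstacles. First, as you yourself note, it only yields the exponent $12$: over each extension $L_j$ carrying a line one has $2\alpha_{L_j}\in\operatorname{im}H^3(L_j)$, and corestricting against the degree-$6$ zero-cycle on the $(1,2,3)$ complete intersection of lines through the point gives $12\alpha\in\operatorname{im}H^3(k)$, not $6\alpha$. Your proposed fix via the universal degree-$3$ zero-cycle on a cubic does not repair this: it only produces further multiples of $12$ (or worse) after corestriction, and there is no evident B\'ezout combination that cancels the extra factor of~$2$. Second, for $n=3$ the scheme of lines through a general point of a cubic surface is empty (expected dimension $-1$), so the argument does not even start in that case.

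The paper obtains the sharp constant $6$ in (i) by invoking \cite[Thm.~1.4]{ACTP}, whose proof does not proceed by locating lines over extensions at all. It works instead at the level of correspondences on $X\times X$: for a smooth cubic hypersurface with a zero-cycle of degree~$1$, one produces a rational self-map (or a cycle-theoretic decomposition of a multiple of the diagonal) that forces the quotient $H^3_{nr}(X)/H^3(k)$ to be $6$-torsion directly. This mechanism is genuinely different from, and not recoverable by, the line-counting strategy you outline.
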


\begin{proof}
 Les \'enonc\'es \cite[Thm. 1.4]{ACTP} et  \cite[Prop. 2.1]{ACTP} donnent que ce quotient est annul\'e  par 6 si $X$ poss\`ede un z\'ero-cycle de degr\'e 1,
  et par 2 si $X$ contient une droite $k$-rationnelle.
Ceci \'etablit (i), (ii) et (iii).  Pour $k$ un corps fini, $X$ poss\`ede un z\'ero-cycle de degr\'e 1, et m\^eme un point rationnel. L'\'enonc\'e (iv) pour $n=3$ est un cas particulier de la proposition \ref{surfaces}.
 Pour $n\geq 4$, l'\'enonc\'e (iv) r\'esulte de la combinaison de l'\'enonc\'e (ii),  du lemme  \ref{Fano}  et d'un argument de corestriction-restriction.
\end{proof}
 
\section{Hypersurfaces de Fano dans $\P^n_{k}$, $k$ alg\'ebriquement clos, $n \neq 5$}
On \'etend en toute caract\'eristique des r\'esultats de \cite{CT}.
On en profite pour rectifier la d\'emonstration de \cite[Thm. 5.6 (vi)]{CT} pour une hypersurface dans $\P^4$.
 
\begin{theo}\label{cycleappalgclos}
Soit $n\geq 3$ un entier, et soit $X \subset \P^n_{k}$ une hypersurface lisse de degr\'e $d$
sur un corps  alg\'ebriquement clos $k$.
Soit $\ell$ un nombre premier diff\'erent de la caract\'eristique de $k$. 
 
(i) Pour $n=3$ et $n\geq 6$,  l'application cycle $$ {\rm cyc}_{X} :  CH^2(X)\otimes \Z_{\ell} \to  H^4(X,\Z_{\ell}(2))$$
est surjective.

(ii) Pour $n=4$ et $d\leq 4$, l'application cycle $$ {\rm cyc}_{X} :  CH^2(X)\otimes \Z_{\ell} \to  H^4(X,\Z_{\ell}(2))$$
est surjective.

(iii) Pour $n \neq 5$ et $d \leq n$,
 on a  $H^3_{nr}(X,\Q_{\ell}/\Z_{\ell}(2))=0$.

\end{theo}

\begin{proof}

\'Etablissons (i).
 Pour $n=3$, la classe de tout $k$-point de $X$ engendre le $\Z_{\ell}$-module 
 $ H^4(X,\Z_{\ell}(2)) \simeq \Z_{\ell}$. L'\'enonc\'e (i) est donc clair pour $n=3$.

 Supposons  $n \geq 4$.
Soit $U =\P^n_{k} \setminus X$. Pour tout entier $m>0$,   on a  la suite exacte de cohomologie 
\'etale \`a supports propres   \cite[III.1.30]{M} :
$$H^4_{c}(U,\Z/{\ell}^m(2)) \to  H^4(\P^n, \Z/{\ell}^m(2)) \to H^4(X, \Z/{\ell}^m(2)) \to H^5_{c}(U,\Z/{\ell}^m(2)).$$
Les groupes finis $H^{i}_{c}(U,\Z/{\ell}^m(2))$
et $H^{2n-i}(U,\Z/{\ell}^m(2n-2))$ sont duaux (dualit\'e de Poincar\'e \cite[VI.11.2]{M}).

 Pour $n\geq 6$,
on a $2n-4 >2n-5> n$. Le th\'eor\`eme de Lefschetz affine \cite[VI.7.2]{M}
donne   
$H^{2n-4}(U,\Z/{\ell}^m(2n-2))=0$
et $H^{2n-5}(U,\Z/{\ell}^m(2n-2))=0$.

La fl\`eche de restriction $ H^4(\P^n, \Z/{\ell}^m(2)) \to H^4(X, \Z/{\ell}^m(2)) $
est donc un isomorphisme de groupes finis pour tout $m$.
La fl\`eche de restriction 
$$\Z_{\ell}= H^4(\P^n, \Z_{\ell}(2)) \to  H^4(X,\Z_{\ell}(2))$$
est donc un isomorphisme.
 Ceci implique 
 que l'application cycle
$${\rm cyc}_{X} :  CH^2(X) \otimes \Z_{\ell} \to H^4(X,\Z_{\ell}(2)) $$
est surjective. Ceci \'etablit (i) pour $n \geq 6$.

\smallskip 

Pour $n>4$, la consid\'eration de la suite exacte
$$  H^3(\P^n, \Z/{\ell}^m(2)) \to H^3(X, \Z/{\ell}^m(2)) \to H^4_{c}(U,\Z/{\ell}^m(2)),$$
la dualit\'e de Poincar\'e et le th\'eor\`eme de Lefschetz affine donnent    alors  $H^3(X, \Z/{\ell}^m(2))=0$ pour tout $m$ et donc $H^3(X,\Z_{\ell}(2))=0$. Ceci sera utilis\'e dans la d\'emonstration du th\'eor\`eme \ref{cubiquefini}
ci-apr\`es.
 
 \smallskip
 
 \'Etablissons  l'\'enonc\'e (ii).
 Soit donc $n=4$.  
 L'argument qui suit corrige celui  donn\'e  dans \cite[Thm. 5.6 (vi)]{CT}.

 Pour tout  degr\'e $d$, et tout entier $m>0$,    la fl\`eche de restriction $H^2(\P^4,\Z/\ell^m) \to H^2(X, \Z/\ell^m)$
 est un isomorphisme, comme on voit en utilisant la suite exacte de cohomologie \'etale \`a supports,
 la dualit\'e de Poincar\'e sur $U=\P^4\setminus X$, et le th\'eor\`eme de Lefschetz affine. 
 Ceci implique  $H^2(X, \Z/\ell^m) \simeq \Z/\ell^m$, et ceci implique que l'application
 cycle $CH^1(X)/\ell^m   \to H^2(X,\mu_{\ell^m})$ d\'efinie via l'application de Kummer
 $\Pic(X)/\ell^m   \to H^2(X,\mu_{\ell^m})$
  est un isomorphisme.

%Le groupe $\Pic(X)=CH^1(X)$ est engendr\'e par la classe d'une section hyperplane
%(th\'eor\`eme de Max Noether). 

Le cup-produit sur la cohomologie \'etale
$$H^4(X,\Z/\ell^m(2))    \times    H^2(X,\Z/\ell^m(1))  \to  H^6(X, \Z/\ell^m(3))=\Z/\ell^m$$
est un accouplement non d\'eg\'en\'er\'e de groupes finis (dualit\'e de Poincar\'e).
D'apr\`es ce qui pr\'ec\`ede,  chacun des deux termes de cet accouplement est isomorphe
\`a $\Z/\ell^m$.  Consid\'erons le diagramme :
 $$ \begin{array}{ccccccccc}
CH^2(X)/\ell^m   &  \times &  CH^1(X)/\ell^m  & \lra &   \Z/\ell^m \\
 \downarrow                       &                                                            &     \downarrow{\simeq}             &                      &\downarrow{=}\\
H^4(X,\Z/\ell^m(2))  &  \times  &  H^2(X,\Z/\ell^m(1))    & \lra &   \Z/\ell^m, \\
\end{array}$$
o\`u l'accouplement sup\'erieur est donn\'e par l'intersection des cycles.
 
  Pour $\ell \neq 2= ({\rm dim}(X)-1) !$, ce diagramme est commutatif \cite[Prop VI.10.7]{M},
Pour   tout premier $\ell$, il commute sur les couples de cycles $(Z_{1}, Z_{2})$ transverses l'un \`a l'autre
 \cite[Prop. VI.9.5]{M}.
Soit $Y=H\cap X \subset X$ la trace d'un hyperplan $H \subset \P^4$.
%C'est un g\'en\'erateur de $CH^1(X)/\ell^m\simeq  H^2(X,\Z/\ell^m(1))$. pas utile
 Sous l'hypoth\`ese   $d \leq 4$, l'hypersurface $X$ contient une droite $L \subset \P^4$.
Ceci est  bien connu pour $d=3$; pour un \'enonc\'e g\'en\'eral, voir \cite[Thm. 2.1]{D}.
Dans l'accouplement sup\'erieur, on a $(L, Y)=1$.  
En appliquant  \cite[Prop. VI.9.5]{M}, on voit
 que la classe de  cycle de $L$ dans  $H^4(X,\Z/\ell^m(2)) \simeq \Z/\ell^m$ engendre
ce groupe.
Ainsi l'application
 cycle
 $$ {\rm cyc}_{X} :   CH^2(X)\otimes \Z_{\ell} \to  H^4(X,\Z_{\ell}(2))$$
est  surjective. Ceci \'etablit (ii) pour $n=4$.

\smallskip

Montrons maintenant (iii).
D'apr\`es \cite[Thm. 1.1]{K}  ou  \cite[Thm. 2.2]{CTK},  la surjectivit\'e de
  $${\rm cyc}_{X} :  CH^2(X) \otimes \Z_{\ell} \to H^4(X,\Z_{\ell}(2))=\Z_{\ell}$$
 implique que le groupe
 $H^3_{nr}(X,\Q_{\ell}/\Z_{\ell}(2))$ est divisible.
 
D'apr\`es un th\'eor\`eme de Roitman  (\cite{R}, voir aussi \cite[\S 4]{CL}),
  l'hypoth\`ese $d \leq n$ implique
    que sur tout corps alg\'ebriquement clos $K$ contenant $k$,
 l'application degr\'e $CH_{0}(X_{K}) \to \Z$ sur le groupe de Chow des
 z\'ero-cycles est un isomorphisme. 
 D'apr\`es un argument g\'en\'eral (voir \cite[Prop. 3.2]{CTK}),
ceci  implique l'existence d'un entier $N>0$
 qui annule $H^3_{nr}(X,\Q_{\ell}/\Z_{\ell}(2))$.  

 Sous l'hypoth\`ese $n\neq 5$ et $d \leq n$, on a donc \'etabli que le
groupe $H^3_{nr}(X,\Q_{\ell}/\Z_{\ell}(2))$ est  divisible et d'exposant fini. Il est
 donc nul.  
 \end{proof}
 \begin{rema}
  Pour $k=\C$ et $X \subset \P^n_{k}$ comme ci-dessus avec $d \leq n$  et tout corps $F$ contenant $k$,
 et  pour $n \geq 6$, on a \'etabli dans \cite[Thm. 5.6 (vii)]{CT} que la fl\`eche naturelle
$$H^3(F, \Q_{\ell}/\Z_{\ell}(2)) \to H^3_{nr}(X_{F},\Q_{\ell}/\Z_{\ell}(2))$$
est un isomorphisme. Il est tr\`es vraisemblable que ce r\'esultat vaut 
sur tout corps $k$ alg\'ebriquement clos, 
avec $\ell$ distinct de la caract\'eristique de $k$.
   \end{rema}

\section{Hypersurfaces de Fano  dans $\P^n_{\F}$, $\F$ fini, $n=3$ et $n\geq 6$}
 
\begin{theo}\label{cubiquefini}
Soit $n\geq 3$ un entier et soit $X \subset \P^n_{\F}$ une hypersurface   lisse de degr\'e $d \leq n$ sur un corps  fini  $\F$.
Soit $\ell$ un nombre premier diff\'erent de la caract\'eristique de $\F$. Pour  $n=3$ et pour $n\geq 6$,  on a  $H^3_{nr}(X,\Q_{\ell}/\Z_{\ell}(2))=0$.  \end{theo}

\begin{proof}
D'apr\`es la proposition  \ref{surfaces}, on peut supposer $n\geq 6$.

Pour $n \geq 6 $, on a \'etabli dans la d\'emonstration du th\'eor\`eme \ref{cycleappalgclos} que l'on a
  $H^3(\X,\Z_{\ell}(2))=0$ et que la restriction
  $$\Z_{\ell}=H^4(\P^n_{\overline{\F}},\Z_{\ell}(2)) \to  H^4(\X,\Z_{\ell}(2))$$
  est un isomorphisme. 
  Pour toute $\F$-vari\'et\'e $Y$, on dispose de la suite exacte d\'eduite de la suite spectrale de Leray
$$0 \to H^1(\F,H^3(\overline{Y},\Z_{\ell}(2))) \to H^4(Y,\Z_{\ell}(2)) \to H^0(\F,H^4(\overline{Y},\Z_{\ell}(2))) \to 0.$$
La comparaison de cette suite pour $Y=\P^n_{\F}$ et pour $Y=X$
donne que l'application cycle
$${\rm cyc}_{X} :  CH^2(X) \otimes \Z_{\ell} \to H^4(X,\Z_{\ell}(2))=\Z_{\ell}$$
est surjective.

D'apr\`es \cite[Thm. 1.1]{K}  ou  \cite[Thm. 2.2]{CTK}, sur un corps fini $\F$,  la surjectivit\'e de
  $${\rm cyc}_{X} :  CH^2(X) \otimes \Z_{\ell} \to H^4(X,\Z_{\ell}(2))=\Z_{\ell}$$
 implique que le groupe
 $H^3_{nr}(X,\Q_{\ell}/\Z_{\ell}(2))$ est divisible. 

 Comme rappel\'e dans la d\'emonstration du th\'eor\`eme \ref{cycleappalgclos}, l'hypoth\`ese $d \leq n$,
le th\'eor\`eme de Roitman \cite{R} et l'argument donn\'e dans
 \cite[Prop. 3.2]{CTK} impliquent que $H^3_{nr}(X,\Q_{\ell}/\Z_{\ell}(2))$ est
 d'exposant fini.
 
 Le groupe  $H^3_{nr}(X,\Q_{\ell}/\Z_{\ell}(2))$ est divisible et d'exposant fini,
 il est donc nul.
 \end{proof}

\section{Hypersurfaces cubiques dans $\P^5_{k}$, $k$ alg\'ebriquement clos}

D\'ej\`a pour les hypersurfaces cubiques, le th\'eor\`eme \ref{cycleappalgclos}, sur un corps alg\'ebriquement clos,
 laisse ouvert le cas $n=5$.
Pour  $X \subset \P^5_{\C}$ une hypersurface cubique lisse sur le corps des complexes,
  Claire Voisin \cite[Thm. 18]{V} a \'etabli 
la conjecture de Hodge enti\`ere dans ce contexte.
 D'apr\`es  \cite{CTV},   ceci implique  $H^3_{nr}(X,\Q/\Z(2)) = 0$,
 et \cite[Thm. 4.4.1]{CTBarbara} montre alors que le r\'esultat vaut pour
 toute hypersurface cubique lisse $X \subset \P^5_k$ 
 sur un corps~$k$ alg\'ebriquement clos
 de caract\'eristique z\'ero.

En utilisant le travail de Charles et Pirutka \cite{CP}, 
on obtient l'analogue de ce r\'esultat sur tout corps alg\'ebriquement clos, 
avec une restriction mineure sur la caract\'eristique.
 
\bthe\label{charlespirutka}
  Soit $k$ un corps alg\'ebriquement clos de caract\'eristique diff\'erente de 2 et 3.
Soit $X \subset \P^5_k$ une hypersurface cubique lisse.
Soit $\ell$ premier diff\'erent de la caract\'eristique de $k$.
On a  $H^3_{nr}(X,\Q_{\ell}/\Z_{\ell}(2))=0$.
\ethe
 
\begin{proof}
D'apr\`es la proposition \ref{exptors},
le groupe $H^3_{nr}(X,\Q_{\ell}/\Z_{\ell}(2))$
est d'exposant fini, en fait divisant 2.
La proposition \ref{exptors} donne donc d\'ej\`a le r\'esultat pour $\ell \neq 2$.

Par une variante du lemme de rigidit\'e de Suslin \cite[Thm. 4.4.1]{CTBarbara},
pour \'etablir ce dernier \'enonc\'e $H^3_{nr}(X,\Q_{\ell}/\Z_{\ell}(2))=0$, on peut se limiter \`a consid\'erer le cas o\`u $k$
est une cl\^{o}ture alg\'ebrique d'un corps $F$ de type fini sur le corps premier,
et o\`u $X=X_{0}\times_{F}k$ pour $X_{0} \subset \P^5_{F}$ une  hypersurface cubique lisse.

On consid\`ere l'application cycle
$CH^2(X) \otimes \Z_{\ell} \to H^4(X,\Z_{\ell}(2)).$
Elle respecte l'action du groupe de Galois ${\rm Gal}(k/F)$.
Elle envoie donc  le groupe des cycles dans le sous-groupe
$$ H^4(X,\Z_{\ell}(2))^{f} \subset H^4(X,\Z_{\ell}(2))$$
des classes dont le stabilisateur est un sous-groupe ouvert.

Comme $H^4(X,\Z_{\ell}(2))$ est un $\Z_{\ell}$-module de type fini
et l'action de  ${\rm Gal}(k/F)$ est continue, 
le conoyau de
$$ H^4(X,\Z_{\ell}(2))^{f} \to H^4(X,\Z_{\ell}(2))$$
est un groupe sans torsion \cite[Lemme 4.1]{CTK}.

Charles et Pirutka  \cite[Thm. 1.1]{CP}  ont montr\'e que l'application
$$CH^2(X) \otimes \Z_{\ell}  \to H^4(X,\Z_{\ell}(2))^{f}$$
est surjective. On conclut que le conoyau de
$${\rm cyc}_{X} :  CH^2(X) \otimes \Z_{\ell} \to H^4(X,\Z_{\ell}(2))$$
est un groupe sans torsion.
D'apr\`es   \cite[Thm. 1.1]{K}   ou \cite[Thm. 2.2]{CTK}, 
  le groupe fini donn\'e par la torsion du
conoyau de l'application cycle
$${\rm cyc}_{X} :  CH^2(X) \otimes \Z_{\ell} \to H^4(X,\Z_{\ell}(2))$$
co\"{\i}ncide avec le groupe quotient de 
$H^3_{nr}(X,\Q_{\ell}/\Z_{\ell}(2))$ par son sous-groupe divisible maximal.
D'apr\`es la proposition \ref{exptors}, le groupe
$H^3_{nr}(X,\Q_{\ell}/\Z_{\ell}(2))$ est d'exposant fini.
Ceci \'etablit
$H^3_{nr}(X,\Q_{\ell}/\Z_{\ell}(2)) = 0.$
\end{proof}

 \begin{rema}
Pour     $X\subset \P^5_{\C}$ une hypersurface cubique lisse, la conjecture de Hodge rationnelle
(\`a coefficients dans $\Q$)
pour les cycles de codimension deux
est connue  depuis 1977  (Zucker \cite{Z}, Murre \cite{murre}).
 La nullit\'e de $H^3_{nr}(X,\Q_{\ell}/\Z_{\ell}(2))$
\'etablie ci-dessus
et \cite[Thm. 1.1]{CTV} redonnent donc la conjecture de Hodge enti\`ere pour
les cycles de codimension deux sur ces hypersurfaces, c'est-\`a-dire le r\'esultat \'etabli en 2007 par
C.~Voisin   \cite[Thm. 18]{V} \cite[Thm. 3.11]{V2}.  Il convient cependant d'observer que la d\'emonstration ci-dessus repose de fa\c con essentielle sur 
\cite{CP}, dont les
m\'ethodes g\'eom\'etriques sont inspir\'ees de celles de \cite{V}
(qui cite \cite{Z}).
\end{rema}

\section{Hypersurfaces cubiques dans $\P^4_{\F}$, $\F$ corps fini}

Pour les hypersurfaces cubiques lisses sur un corps fini, le travail \cite{PS} de Parimala et Suresh   permet de compl\'eter le th\'eor\`eme  \ref{cubiquefini} pour $n=4$.

\bthe\label{parisu}
{\it Soit $X \subset \P^4_{\F}$ une hypersurface cubique lisse sur un corps fini $\F$
de caract\'eristique diff\'erente de $2$.

(i) Pour tout $\ell$ premier diff\'erent de la caract\'eristique de $\F$, on a
$H^3_{nr}(X,\Q_{\ell}/\Z_{\ell}(2))=0$.

(ii) Soit $\FF$ une cl\^oture alg\'ebrique de $\F$ et $G={\rm Gal}(\FF/\F)$.
L'application naturelle 
$$ CH^2(X) \to CH^2(\X)^G$$ est un isomorphisme.

(iii) L'application cycle
$${\rm cyc}_{X} :  CH^2(X) \otimes \Z_{\ell} \to H^4(X,\Z_{\ell}(2))$$
est surjective.}
\ethe

 \begin{proof}
(i)  Le cas $\ell \neq 2$ r\'esulte d\'ej\`a de la proposition \ref{exptors}.
Pour d\'emontrer la proposition, par le lemme \ref{Fano} et un argument de  restriction-corestriction,
on peut supposer que $X$ contient une droite $L \subset X$ d\'efinie sur le corps $\F$.
En \'eclatant $X$ le long de $L$,
on trouve une $\F$-vari\'et\'e projective et lisse~$Y$ $\F$-birationnelle \`a $X$ et munie d'une structure de fibration en coniques sur $\P^2_{\F}$.
Le th\'eor\`eme de Parimala-Suresh \cite[Cor. 5.6]{PS}  
 donne alors $H^3_{nr}(Y,\Q_{\ell}/\Z_{\ell}(2))=0$, et donc
$H^3_{nr}(X,\Q_{\ell}/\Z_{\ell}(2))=0$.

(ii) On sait (th\'eor\`eme de Lefschetz faible) que $H^3(\X, \Z_{\ell})$ est sans torsion. 
La nullit\'e de $H^3_{nr}(X,\Q_{\ell}/\Z_{\ell}(2))$ et 
\cite[Cor. 6.9]{CTK} donnent (ii).

(iii)  Comme  $X$ est g\'eom\'etriquement unirationnelle de dimension 3, le 
conoyau de l'application cycle
$CH^2(X) \otimes \Z_{\ell} \to H^4(X,\Z_{\ell}(2))$
est un groupe fini \cite[Prop. 3.23]{CTK}.
   D'apr\`es \cite[Thm. 1.1]{K}  ou  \cite[Thm. 2.2]{CTK},  la torsion du  conoyau  de l'application cycle s'identifie
au quotient de $H^3_{nr}(X,\Q_{\ell}/\Z_{\ell}(2))$ par son sous-groupe
divisible maximal. De (i) r\'esulte donc (iii).
\end{proof}

\begin{rema}
La d\'emonstration du 
 th\'eor\`eme de Parimala et Suresh \cite{PS} utilise
  un r\'esultat de th\'eorie du corps de classes sup\'erieur, \`a savoir 
 la nullit\'e de $H^3_{nr}(S, \Q_{\ell}/\Z_{\ell}(2))$
pour $S$ une surface projective et lisse sur un corps fini (\cite[Rem. 2, p. 790]{CTSS}; \cite[Thm. 0.7, Cor]{Kato}).
Elle utilise aussi beaucoup d'autres arguments d\'elicats.

En utilisant la th\'eorie du corps de classes sup\'erieur, et le lien entre   la surface de Fano
des droites de $X$ et le groupe des cycles de codimension 2 de $X$,
on peut donner une  d\'emonstration alternative   du th\'eor\`eme
\ref{parisu}. Soit $Y/\F$ la surface de Fano de $X$, qui param\'etrise les droites de $X$. 
C'est une surface projective, lisse, g\'eom\'etriquement connexe \cite[Cor. 1.12]{AK}, qui poss\`ede donc un z\'ero-cycle de degr\'e~1 sur le corps fini $\F$.

 La famille universelle des droites de $X$
d\'efinit une correspondance entre $Y$ et $X$ qui induit un homomorphisme
$CH_{0}(Y) \to CH^2(X)$, lequel induit une application $A_{0}(Y) \to CH^2_{0}(X)$,
o\`u l'on a not\'e $A_{0}(Y) \subset CH_{0}(Y)$ le sous-groupe des z\'ero-cycles de degr\'e z\'ero, et $CH^2_{0}(X) \subset CH^2(X)$ le sous-groupe des 1-cycles d'intersection nulle
avec une section hyperplane. Sur un corps de caract\'eristique diff\'erente de 2,
on sait  \cite[VI,VII]{murrecubic}
que  l'application
 $A_{0}(\Y) \to CH^2_{0}(\X)$ se factorise comme
 $$ A_{0}(\Y) \to Alb_{Y}(\FF) \oi CH^2_{0}(\X).$$
 D'apr\`es le th\'eor\`eme de Roitman,   l'application d'Albanese
 $ A_{0}(\Y) \to Alb_{Y}(\FF),$
qui est surjective, a son noyau uniquement divisible (en fait, pour $\F$ corps
fini, cette fl\`eche est un isomorphisme). Ceci assure que l'application
$A_{0}(\Y)^G \to CH^2_{0}(\X)^G$ est surjective. On a le diagramme commutatif
\[\xymatrix{
A_{0}(Y)  \ar[d] \ar[r] & CH^2_{0}(X)  \ar[d]  \\
A_{0}(\Y)^G \ar[r] &  CH^2_{0}(\X)^G .
}\]
 La th\'eorie du corps de classes sup\'erieur
(Kato-Saito \cite[Prop. 9.1]{KS}) montre que, pour toute vari\'et\'e projective lisse  $Y$ g\'eom\'etriquement connexe
sur un corps fini, l'application $A_{0}(Y)\to A_{0}(\Y)^G$ est surjective
(pour $Y/\F$ une surface, voir aussi \cite[\S 6.2]{CTK}).
On conclut donc que $ CH^2_{0}(X) \to CH^2_{0}(\X)^G$ est surjectif, puis que
$CH^2(X) \to CH^2(\X)^G$ est surjectif. Ceci donne l'\'enonc\'e (ii) du th\'eor\`eme \ref{parisu}.
Comme on a $H^3_{nr}(\X, \Q_{\ell}/\Z_{\ell}(2))=0$,
l'\'enonc\'e (i) r\'esulte alors de (ii) et de
\cite[Cor. 6.9]{CTK}. L'application  $CH^2(X) \otimes \Z_{\ell} \to H^4(X,\Z_{\ell}(2))$
a son conoyau fini.
D'apr\`es \cite{K} ou \cite[Thm. 2.2]{CTK}, ce conoyau s'identifie
au quotient de $H^3_{nr}(X,\Q_{\ell}/\Z_{\ell}(2)) $ par son sous-groupe divisible maximal.
Ainsi l'application $CH^2(X) \otimes \Z_{\ell} \to H^4(X,\Z_{\ell}(2))$
est surjective. 
 \end{rema}

\begin{rema}
Sur un corps fini $\F$ et  pour un nombre premier $\ell \neq {\rm car}(\F)$, la question si l'on a  $H^3_{nr}(X,\Q_{\ell}/\Z_{\ell}(2))=0$ pour 
une hypersurface cubique lisse  $X \subset \P^5_{\F}$ reste ouverte dans le cas crucial $\ell=2$ (pour $\ell\neq 2$,  voir la Proposition \ref{exptors} (iv)).
Elle est \'equivalente \`a la question de la surjectivit\'e de l'application cycle
$$ {\rm cyc}_{X} :  CH^2(X)\otimes \Z_{\ell} \to H^4(X, \Z_{\ell}(2)).$$
\end{rema}

\end{document}